\documentclass[a4paper,11pt]{article}
\usepackage[hidelinks]{hyperref}
\hypersetup{
    colorlinks=true,
    linktoc=all,
    linkcolor=red,     
   citecolor=blue,     
}
\usepackage{tikz}
\usepackage{subfigure}
\usepackage{graphicx}
\usepackage{amssymb}
\usepackage{latexsym, bm}
\usepackage{multicol}
\usepackage{indentfirst}
\usepackage{amsfonts}
\usepackage{amsmath}

\usepackage{diagbox}
\usepackage[utf8]{inputenc}
\usepackage{authblk}
\usepackage{setspace}
\fontfamily{cmr}\fontsize{9}{11}\selectfont

\newtheorem{theorem}{Theorem}[section]
\newtheorem{lemma}[theorem]{Lemma}
\newtheorem{claim}[theorem]{Claim}

\newtheorem{conjecture}[theorem]{Conjecture}

\newtheorem{observation}[theorem]{Observation}
\newcommand{\ignore}[1]{}

\begin{document}

\begin{spacing}{1.0}
\date{}
\title{An improved double-exponential lower bound for $r_4(5,n)$\thanks{After submitting the manuscript to arXiv.org, the first and third authors learned that the second and fourth authors, as well as Longma Du, Xinyu Hu, Ruilong Liu, and Guanghui Wang, had independently obtained the same result. Following discussions, the authors decided to write a joint paper. The current manuscript is mainly based on the original version by the first and third authors. }}
		
\author{Chunchao Fan,\footnote{Center for Discrete Mathematics, Fuzhou University, Fuzhou, 350108, P.~R.~China. Email: {\tt 1807951575@qq.com}.}
			\;\; Mingze Li,\footnote{School of Mathematical Sciences, University of Science and Technology of China, Hefei
230026, China. Email: {\tt lmz10@mail.ustc.edu.cn}} \;\; Qizhong Lin,\footnote{Center for Discrete Mathematics, Fuzhou University, Fuzhou, 350108, P.~R.~China. Email: {\tt linqizhong@fzu.edu.cn}. Supported by National Key R\&D Program of China (Grant No. 2023YFA1010202) and NSFC (No.\ 12571361).} \;\; Bo Ning\footnote{College of Computer Science, Nankai University, Tianjin, 300350, P.R. China. Email: \texttt{bo.ning@nankai.edu.cn}. Research supported in part by NSFC (No. 12371350) and the Fundamental Research Funds for the Central Universities, Nankai University (No. 63263259).}
             }
\maketitle

\begin{abstract}
The Ramsey number $r_k(s,n)$ is the smallest integer $N$ such that every $N$-vertex $k$-graph contains either a copy of $K_s^{(k)}$ or an independent set of size $n$. 
A well-known conjecture of Erd\H{o}s and Hajnal states that for any fixed $4\le k<s$, $r_k(s,n)\ge \operatorname{twr}_{k-1}(\Omega(n)).$
At present, only the last two cases of this conjecture remain open, namely $r_4(5,n)\ge2^{2^{\Omega(n)}}$ and $r_4(6,n)\ge2^{2^{\Omega(n)}}$.
Recently, Du, Hu, Liu, and Wang achieved a breakthrough by proving $r_4(5,n)\ge 2^{2^{\Omega(n^{1/7})}}$, which is the first double-exponential lower bound for $r_4(5,n)$. In this note, we improve this to $2^{2^{\Omega(n^{1/5})}}$ by modifying their  construction and reducing the greedy selection of local maxima from seven layers to five, thereby making further progress towards the Erd\H{o}s-Hajnal conjecture.

\end{abstract}
	
\section{Introduction}
We write $K^{(k)}_n$ for the complete $k$-uniform hypergraph ($k$-graph for short) on $n$ vertices.
The Ramsey number $r_k(s,n)$ is the smallest integer $N$ such that every $N$-vertex $k$-graph contains either a copy of $K_s^{(k)}$ or an independent set of size $n$.  Estimating the Ramsey number $r_k(s,n)$ is a fundamental and notoriously difficult problem in combinatorics, which has attracted great attention since 1935.

For the diagonal cases, it is well known that $2^{n/2}<r_2(n,n)<2^{2n}$ due to Erd\H{o}s and Szekeres \cite{E-S} and Erd\H{o}s \cite{E-1}.
Following several important improvements by Thomason \cite{Thom}, Conlon \cite{C-1}, and Sah \cite{sah}, the first exponential improvement was achieved by Campos, Griffiths, Morris, and Sahasrabudhe \cite{C-G-M-S}, who showed that for some constant $\varepsilon > 0$ and all sufficiently large $n$, $r_2(n,n)<(4-\varepsilon)^n$. More recently, Gupta, Ndiaye, Norin, and Wei \cite{G-N-N-L} improved the upper bound to $3.8^{n+o(n)}$. 

However, for $k\ge3$, there still exists an exponential gap between the best known
upper and lower bounds for $r_k(n,n)$. In particular, Erd\H{o}s, Hajnal, and Rado \cite{E-H-R,E-R-2} showed that
$$\textrm{twr}_{k-1}(\Omega(n^2))<r_k(n,n)<\textrm{twr}_k(O(n)),$$
where the tower function is defined recursively as $\textrm{twr}_1(x)=x$ and $\textrm{twr}_{i+1}(x)=2^{\textrm{twr}_i(x)}$. 
Erd\H{o}s, Hajnal, and Rado \cite{E-H-R} conjectured that $r_k(n,n)=\textrm{twr}_k(\Theta(n))$.

Off-diagonal Ramsey numbers $r_k(s,n)$ have also been extensively studied where $k$ and $s$ are fixed.
The classic results of Kim \cite{kim} and Ajtai, Koml\'{o}s, and Szemer\'{e}di \cite{A-K-S} determined $r_2(3,n)=\Theta(n^2/\log n)$. The best upper bound $r_2(3,n)\le(1+o(1))n^2/\log n$ is due to Shearer \cite{she}.
Following several important advances by Bohman \cite{B-1}, Fiz Pontiveros, Griffiths, and Morris \cite{FP-G-M} and Campos, Jenssen, Michelen, and Sahasrabudhe \cite{C-J-M-S}, Hefty, Horn, King, and Pfender \cite{H-H-K-P} obtained $r_2(3,n)\ge(\frac{1}{2}+o(1))n^2/\log n$.
Recently, a breakthrough result of Mattheus and Verstra\"{e}te \cite{M-V} showed that $r_2(4,n)\ge\Omega(\frac{n^3}{\log^4n})$.
Generally, the previous best bounds \cite{A-K-S,B-K,lrz,sp75} for $r_2(s,n)$ with fixed $s\ge 5$ are $\tilde{\Omega}(n^{\frac{s+1}{2}})\le r_2(s,n)\le(1+o(1))(n^{s-1}/\log^{s-2} n)$, where $\tilde{\Omega}$ suppresses factors of the form $(\log n)^{\Theta(1)}$.
In particular, Erd\H{o}s \cite{C-G-E} conjectured that $r_2(s,n)> n^{s-1}/\log ^cn$ for some suitable constant $c > 0$. Remarkably, Ma, Shen, and Xie \cite{M-S-X} obtained the first exponential improvement to Erd\H{o}s' lower bound for $r_2(n,Cn)$ with constant $C>1$.
Hunter, Milojevi\'c, and Sudakov \cite{H-M-S} (independently Sahasrabudhe \cite{saha}) gave a simple proof based on Gaussian random graphs.

For $3$-graphs, improving the upper bound by Erd\H{o}s and Rado \cite{E-R-2} and the lower bound by Erd\H{o}s and Hajnal \cite{E-H-Con},  Conlon, Fox, and Sudakov \cite{C-F-S} obtained that for $s\ge4$, $$2^{\Omega (n\log n)}\le r_3(s,n)\le2^{O(n^{s-2}\log n)}.$$
For $k\ge4$, Erd\H{o}s and Rado \cite{E-R-2} showed that $$r_k(s,n)\le \textrm{twr}_{k-1}(n^{O(1)}).$$
By applying the Erd\H{o}s-Hajnal stepping-up lemma, it follows that $r_k(s,n)\ge \textrm{twr}_{k-1}(\Omega(n))$
for $k\ge4$ and $s\ge2^{k-1}-k+3$, in particular, $r_4(7,n)\ge 2^{2^{\Omega(n)}}$.

Erd\H{o}s and Hajnal made the following conjecture.
\begin{conjecture}[Erd\H{o}s and Hajnal \cite{E-H-Con}]\label{E-H-C}
    For fixed $4\le k<s$,
$r_k(s,n)\ge \operatorname{twr}_{k-1}(\Omega(n)).$
\end{conjecture}

 Conlon, Fox, and Sudakov \cite{cfs} verified this conjecture for  $s\ge\lceil5k/2\rceil-3$. Subsequently, Mubayi and Suk \cite{M-S} and independently Conlon, Fox, and Sudakov (unpublished) verified this conjecture for all $s\ge k+3$. 
Improving the bounds of $r_4(5,n)$ and $r_4(6,n)$ in \cite{M-S,M-S-2},  Mubayi and Suk \cite{M-S-1} proved that for $s\ge k+2$, $r_4(6,n)\ge2^{2^{\Omega(n^{1/5})}}$ and consequently $r_k(k+2,n)\ge \textrm{twr}_{k-1}(\Omega(n^{1/5}))$. This bound was later improved  by Du, Hu, Liu, and Wang \cite{D-H-L-W-0} to $r_k(k+2,n)\ge \textrm{twr}_{k-1}(\Omega(n^{1/2}))$. For the remaining case $s=k+1$, Mubayi and Suk \cite{M-S-1} showed $r_4(5,n)\ge2^{n^{\Omega(\log n)}},$ and consequently $r_k(k+1,n)\ge \textrm{twr}_{k-2}(n^{\Omega(\log n)})$. To close the exponential gap between the best known lower and upper bounds for $r_k(k+1,n)$ for $k\ge4$, it suffices to show that there exists some constant $c>0$ such that $$r_4(5,n)\ge2^{2^{\Omega(n^c)}}$$ from the stepping-up lemma \cite[Lemma 2.5]{M-S-2} (one can see \cite[Lemma 2.1]{M-S-1} for general cases). 

\medskip
In particular, Mubayi and Suk posed the following slightly weak conjecture.
\begin{conjecture}[Mubayi and Suk \cite{M-S-1}]\label{M-S-conj}
    For $n\ge5$, there is an absolute constant $c > 0$ such that
$r_4(5,n)\ge 2^{2^{n^{c}}}.$
\end{conjecture}

Du, Hu, Liu, and Wang recently confirmed the above conjecture.

\begin{theorem}[Du, Hu, Liu, and Wang \cite{D-H-L-W}]\label{center-2}
     For all $n\ge 5$, we have $r_4(5,n)\ge 2^{2^{\Omega(n^{1/7})}}.$
\end{theorem}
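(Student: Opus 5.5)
We plan to prove the bound with a stepping-up construction over a graph coloring. The base object is a graph coloring rather than a $3$-graph, and a local-maxima selection rule on the $\delta$-sequence is layered on top of it.

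\textbf{Setup.} Take $m=2^{\Theta(n^{1/7})}$ and $N=2^m$. Identify $V=\{0,1\}^m$ with $[N]$ via binary expansion. For $a<b$ let $\delta(a,b)$ be the most significant bit where $a$ and $b$ differ. For $v_1<\dots<v_4$ set $\delta_i=\delta(v_i,v_{i+1})$. The $\delta_i$ are distinct in consecutive pairs, so each $4$-tuple has one of the monotone or zigzag patterns of $(\delta_1,\delta_2,\delta_3)$.

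\textbf{Base coloring.} Choose a $2$-coloring $\chi$ of the pairs (or a suitable $3$-graph) on $[m]$ with no large homogeneous structure. Use a random graph with independence and clique numbers $O(\log m)=O(n^{1/7})$, or a coloring of $\binom{[m]}{3}$ taken from a random construction.

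\textbf{Defining the $4$-graph.} Declare a $4$-tuple an edge according to its pattern.
\begin{enumerate}
\item If $\delta_1<\delta_2<\delta_3$ or $\delta_1>\delta_2>\delta_3$, it is an edge iff $\chi$ takes a prescribed value on the relevant $\delta$'s.
\item If the pattern is a zigzag, it is an edge iff the local maximum (or minimum) satisfies a compatibility condition with $\chi$.
\end{enumerate}
The rules are tuned so that a $5$-set $v_1<\dots<v_5$ is never fully an edge. This is checked by case analysis on the $\delta$-sequence $(\delta_1,\dots,\delta_4)$ of a $5$-set: its four $4$-subsets inherit $\delta$-sequences that either drop an end $\delta$ or merge two consecutive ones into their max. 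A contradiction must be derived in each pattern, namely monotone, single peak, single valley, and alternating.

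\textbf{Independent sets.} Given an independent set $S$ of size $n$, we greedily extract structure from the $\delta$-sequence of $S$ in sorted order. The key tool is the standard fact that a sequence of distinct-in-consecutive $\delta$'s of length $L$ contains either a long monotone run or many local maxima. Passing to the local maxima gives a subsequence $S'$ whose $\delta$'s are again governed by the pattern rules.

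We repeat this selection layer by layer. Each layer either produces a homogeneous set in $\chi$ of size $\Omega(\log m)$, which is impossible, or shrinks the set roughly by a power. After seven layers we need $n^{1/7}$ to exceed the homogeneity bound of $\chi$, which forces $\log m=\Theta(n^{1/7})$. This gives $N=2^{2^{\Omega(n^{1/7})}}$.

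\textbf{Main obstacle.} The hardest part is the simultaneous design of the zigzag rules. They must kill every $K_5^{(4)}$ configuration, including alternating $\delta$-patterns, while still letting independence propagate through the iterated local-maxima selection. The first design to try mimics Mubayi–Suk's $r_4(6,n)$ construction and adds an extra local-max condition to break the $5$-cliques. We would then verify the layer count by tracking exactly which $4$-subsets each selection step preserves.
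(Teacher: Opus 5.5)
Your outline has the right skeleton: stepping up from a pair-coloring of $[m]$, edge rules by $\delta$-pattern, and iterated local-maxima selection. But the parts that carry the proof are left unspecified, so it does not yet establish the theorem. (The paper gets this statement from its stronger Theorem~\ref{main}.)

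\textbf{The construction.} ``A prescribed value'' and ``a compatibility condition'' are the whole design problem, and they are never fixed. The base property you name, small clique and independence number of $\chi$, is also not the one that works. The five (not four) $4$-subsets of an increasing $5$-set realize all four triples of $\{\delta_1,\dots,\delta_4\}$. So a monotone rule firing on monochromatic triples turns any monochromatic $K_4$ of $\chi$ into a $K_5^{(4)}$, and such a $K_4$ is present in every coloring once $m\ge 18$. A rule firing on non-monochromatic triples is defeated by a red $C_4$ with blue diagonals. The paper instead uses an ordered, non-homogeneous pattern, $\phi(\delta_1,\delta_2)=\phi(\delta_2,\delta_3)\ne\phi(\delta_1,\delta_3)$, which cannot hold on all these triples (Claim~\ref{no-mono}). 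It pairs this with Lemma~\ref{phi}: a random coloring of $\binom{U}{2}$, $|U|=2^{c_0n}$, in which every $n$-set contains such a triple, proved via the pair-disjoint triples of a partial Steiner system. For zigzags, the paper makes the local-maximum pattern $\delta_1<\delta_2>\delta_3$ never an edge, which at once kills your single-peak and alternating $5$-sets. The local-minimum pattern splits into two rules:
rule \textbf{(ii)} for $\delta_1>\delta_3$, requiring $\phi(\delta_1,\delta_2)\ne\phi(\delta_2,\delta_3)$;
rule \textbf{(iii)} for $\delta_1<\delta_3$, requiring all three pairs to have the same color.
Only the two valley patterns then need a genuine case analysis, via Property C.

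\textbf{The independence bound has no endgame.} The dichotomy ``long monotone run or many local maxima'' only shows that nested sequences $\Delta^{(1)}\supset\Delta^{(2)}\supset\cdots$ with property ($\ast$) exist. Each retains a $1/(2n)$ fraction of the previous one when monotone runs have length $<n$. That is a factor, not a power, which is why the independent set must have size about $(2n)^L$ and the exponent is $1/L$. The layering by itself never produces a contradiction.

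In the paper the contradiction comes from a separate finite argument. Take the two elements of the top layer and descend through the layers by closest neighbors (e.g.\ $b_1=a_1^+$, $b_2=b_1^+$, $c=b_2^-$, $d=c^+$, $e=d^-$ when $\delta_{a_1}<\delta_{a_2}$). Use Observation~\ref{observation-1} to pin down the $\delta$-patterns of tuples such as $(v_{a_1},v_x,v_{x+1},v_{a_2+1})$. Then combine rules \textbf{(ii)}/\textbf{(iii)} with two-color pigeonholing (Claims~\ref{ind-1} and~\ref{ind-2}) to force an edge inside the independent set.

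The number of layers is exactly the depth this descent needs: seven in Du--Hu--Liu--Wang, five in this paper. Your ``after seven layers'' is asserted with no such argument, and it cannot even be checked until the zigzag rules are fixed.
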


In this short note, we further improve this lower bound by modifying their construction.
\begin{theorem}\label{main}
For all $n\geq 5$, we have
$
r_4(5, n) \ge  2^{2^{\Omega(n^{1/5})}}.
$
\end{theorem}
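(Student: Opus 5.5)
We follow the stepping-up framework of Mubayi and Suk as refined by Du, Hu, Liu, and Wang. We start from a graph (or $3$-graph) coloring with no large monochromatic structure and lift it to a $4$-graph on $N=2^{2^{m}}$ vertices, where $m=\Theta(n^{1/5})$. The vertex set is identified with $\{0,1\}^{M}$, $M=2^m$, ordered lexicographically, with $\delta(v,w)$ the first coordinate where $v$ and $w$ differ. For $v_1<\dots<v_4$ we set $\delta_i=\delta(v_i,v_{i+1})$, which gives a sequence $\delta_1,\delta_2,\delta_3$ of integers in $[M]$.

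As in \cite{D-H-L-W}, the edge set is defined by local extrema conditions on this $\delta$-sequence, combined with a base coloring $\phi$ on the $\delta$-values themselves. Each $\delta$-value is an element of $[2^m]$ and hence a vertex of a smaller cube. On that cube we use a $3$-graph (or graph) $\phi$ with no large independent set and suitable forbidden configurations, e.g.\ an Erd\H{o}s--Hajnal/Conlon--Fox--Sudakov type construction on $2^m$ vertices whose independence number is polynomial in $m$. The modification relative to \cite{D-H-L-W} is to change which local maxima/minima patterns of $(\delta_1,\delta_2,\delta_3)$, together with their $\phi$-values, make a $4$-tuple an edge. The aim is that the extraction argument for an independent set needs only five nested greedy layers instead of seven.

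The proof then has two parts. \textbf{$K_5^{(4)}$-freeness:} take $v_1<\dots<v_5$, with $\delta$-sequence $\delta_1,\dots,\delta_4$. We do a case analysis on the position of the maximum and the monotone patterns, using the standard facts that consecutive $\delta$'s are distinct and that $\delta(v_i,v_j)=\max/\min$ rules hold. In each case we show that some $4$-subset violates the edge rule, or that the $\phi$-values force a forbidden configuration in the base coloring. \textbf{No independent set of size $n$:} given a large vertex set $S$, we greedily build a sequence of vertices whose $\delta$-values form a long monotone or local-maximum structure. Each greedy layer costs a polynomial factor in the size: a set of size $n$ yields, after one layer, a structure of size about $n^{1/5}$ per layer, so five layers give the exponent $1/5$. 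The induced $\delta$-values form a set of about $m$ vertices in the base hypergraph. Since the base coloring has no independent set of that size, we obtain an edge of $\phi$, which lifts to an edge among the chosen vertices. Choosing $m=cn^{1/5}$ then gives $N=2^{2^{\Omega(n^{1/5})}}$.

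The main obstacle is designing the edge rule so that both parts hold at once. Reducing from seven to five layers means that, at each layer, the greedy selection of local maxima must directly produce the configuration that the edge definition rewards. This weakens the structure available in the $K_5$-freeness case analysis. I would first mirror \cite{D-H-L-W}'s definition and identify which two layers only serve to rule out degenerate patterns such as $\delta_1<\delta_2>\delta_3$ mixed with monotone runs. I would then absorb those patterns into the edge definition via an extra condition on $\phi$, and finally recheck the $5$-vertex cases one by one.
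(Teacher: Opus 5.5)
There is a genuine gap. What you describe is a strategy, and the step you yourself call the main obstacle, the edge rule, is never supplied. Without it, neither half of the argument can actually be carried out.

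The paper's construction is specific. Take a $2$-colouring $\phi$ of the pairs of $U=\{0,\dots,\lfloor 2^{c_0n}\rfloor-1\}$ such that every $n$-subset contains $a<b<c$ with $\phi(a,b)=\phi(b,c)\neq\phi(a,c)$. Here $n$ is the base parameter, and the resulting independence bound is $2^6n^5+1$. Declare $(v_1,\dots,v_4)$ an edge iff one of the following holds:
\begin{itemize}
\item $\delta_1,\delta_2,\delta_3$ is monotone with exactly that colour pattern;
\item $\delta_1>\delta_2<\delta_3$ and $\delta_1>\delta_3$, with $\phi(\delta_1,\delta_2)\neq\phi(\delta_2,\delta_3)$;
\item $\delta_1>\delta_2<\delta_3$ and $\delta_1<\delta_3$, with $\phi(\delta_1,\delta_2)=\phi(\delta_1,\delta_3)=\phi(\delta_2,\delta_3)$.
\end{itemize}
Patterns $\delta_1<\delta_2>\delta_3$ are never edges. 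With this rule, $K_5^{(4)}$-freeness holds for \emph{every} $\phi$, and no forbidden configuration in the base colouring is needed:
\begin{itemize}
\item a monotone $\delta(P)$ is excluded by comparing three $4$-subsets;
\item an interior local maximum gives a non-edge $4$-subset;
\item an interior local minimum is excluded by Property C together with two-colour consistency.
\end{itemize}
Your plan of ``absorbing degenerate patterns via an extra condition on $\phi$'' does not pin down any such rule, so the case check you promise cannot be performed.

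Your account of the independence bound misplaces where the base colouring is used. It also omits the argument that actually produces the number five.

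In the paper, the base property enters only inside each layer. By property ($\ast$), any three elements $\delta_{j_1},\delta_{j_2},\delta_{j_3}$ of a monotone consecutive run in $\Delta^{(t-1)}$ are realised as $\delta(v_{j_1},v_{j_1+1},v_{j_2+1},v_{j_3+1})$. So rule (i) and Lemma~\ref{phi} cap such runs below $n$. Hence each layer shrinks the sequence by a factor of at most $2n$, and $2^6n^5+1$ vertices suffice to reach $|\Delta^{(5)}|=2$. This per-layer factor must be linear in the base parameter. A base structure on $2^m$ points whose threshold is only polynomial, say $m^k$, would give exponent $1/(5k)$.

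After five layers only the two elements of $\Delta^{(5)}$ remain, not ``about $m$'' $\delta$-values to which the base property could be applied. The contradiction is then a purely two-colour argument:
\begin{itemize}
\item Descend through nearest neighbours in layers $4,3,2,1,0$, e.g.\ $b_1=a_1^+$, $b_2=b_1^+$, $c=b_2^-$, $d=c^+$, $e=d^-$.
\item Observation~\ref{observation-1} makes everything strictly between such neighbours smaller. So patterns like $(\delta_{a_1}>\delta_x<\delta_{b_2})$ are realised by actual $4$-tuples of $Q$.
\item After a pigeonhole among $\phi(\delta_{a_1},\delta_{a_2})$, $\phi(\delta_{b_1},\delta_{a_2})$, $\phi(\delta_{b_2},\delta_{a_2})$, rules (ii) and (iii) force a chain of colour equalities and inequalities (Claims~\ref{ind-1} and~\ref{ind-2}) until two of them clash.
\end{itemize}
That chain is what shows five nested layers suffice. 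Without it, the exponent $1/5$ in your proposal is asserted rather than derived.
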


It is important to note that whether $r_4(5,n) \ge 2^{2^{\Omega(n)}}$ and $r_4(6,n) \ge 2^{2^{\Omega(n)}}$ hold remains an open problem, which is exactly what the Erd\H{o}s-Hajnal conjecture predicts for these cases. The current best results only achieve $2^{2^{\Omega(n^{1/5})}}$ from our result and $2^{2^{\Omega(n^{1/2})}}$ from \cite{D-H-L-W-0}, which are still a considerable distance from the conjectured $2^{2^{\Omega(n)}}$.

\section{Notations and basic properties}\label{B-p}


We will use the following notations and definitions unless otherwise stated.

Let $V = \{0, 1, \ldots, 2^N - 1\}$. For any $a \in V$, we write $a = \sum_{i=0}^{N-1} a(i) 2^i$ where $a(i) \in \{0,1\}$ for each $i$. For $a \neq b$, let $\delta(a,b)$ denote the largest $i$ such that $a(i) \neq b(i)$. We always denote a subset by $(a_1, a_2, \ldots, a_r)$ with $a_1 < a_2 < \cdots < a_r$.

Given any set $A = (a_1, a_2, \ldots, a_r)$ of vertices, we write for $1 \le i \le r-1$
\[
\delta_i = \delta(a_i, a_{i+1}), \qquad \text{and} \qquad \delta(A) = \{\delta_1, \delta_2, \dots, \delta_{r-1}\}.
\]

We have the following properties \cite{grs}.

\begin{description}\label{p-ab}
\item[Property A:] For every triple $a < b < c$, $\delta(a,b) \neq \delta(b,c)$.
\item[Property B:] For $a_1 < \cdots < a_r$, $\delta(a_1, a_r) = \max_{1 \le i \le r-1} \delta_i$.
\end{description}

We will also use the following property, see e.g. \cite{F-H-L-L,H-L-L-W-1,M-S-1}.

\begin{description}\label{p-d}
\item[Property C:] For every $4$-tuple $a_1 < \cdots < a_4$, if $\delta_1 > \delta_2$, then $\delta_1 \neq \delta_3$.
\end{description}

\noindent
{\bf Proof.} Suppose, to the contrary, that $\delta_1 = \delta_3$. Then, by Property B, $\delta(a_1, a_3) = \delta_1 = \delta_3 = \delta(a_3, a_4)$. This contradicts Property A for the triple $a_1 < a_3 < a_4$. \hfill $\Box$

\medskip
For any set $A = (a_1,\ldots, a_r)$, we call $\delta_i$ a \textit{local minimum} of $A$ if $\delta_{i-1} > \delta_i < \delta_{i+1}$, a \textit{local maximum} if $\delta_{i-1} < \delta_i > \delta_{i+1}$, and a \textit{local extremum} if it is either a local minimum or a local maximum. We say that the sequence $\{\delta_1,\delta_2,\dots,\delta_{r-1}\}$ is \textit{monotone} if $\delta_1<\delta_2<\cdots<\delta_{r-1}$ (monotone increasing) or $\delta_1>\delta_2>\cdots>\delta_{r-1}$ (monotone decreasing).

\section{New lower bound for $r_4(5,n)$}\label{pf}

We need the following result by Du, Hu, Liu, and Wang \cite{D-H-L-W}.
\begin{lemma}[Du, Hu, Liu, and Wang \cite{D-H-L-W}]\label{phi}
For every $n\ge 5$, there exists an absolute constant $c_0>0$ such that the following holds. There is a red/blue coloring $\phi$ of the pairs of $\{0,1,\ldots,\lfloor 2^{c_0n}\rfloor-1\}$ with the property that every $n$-set $A\subset \{0,1,\ldots,\lfloor 2^{c_0n}\rfloor-1\}$ contains a $3$-tuple $a_i<a_j<a_k$ satisfying $\phi(a_i,a_j)=\phi(a_j,a_k)\ne\phi(a_i,a_k).$
\end{lemma}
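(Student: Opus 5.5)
The approach is a random colouring combined with a union bound. The key input is that colourings of an $n$-set avoiding the forbidden pattern are very rare. Call a red/blue colouring $\psi$ of the pairs of an ordered $n$-set $A=(a_1,\ldots,a_n)$ \emph{good} if it contains no triple $a_i<a_j<a_k$ with $\psi(a_i,a_j)=\psi(a_j,a_k)\neq\psi(a_i,a_k)$. The plan is:
\begin{enumerate}
\item Show that the number of good colourings of a fixed $n$-set is at most $n!$, and in any case at most $2^{O(n\log n)}$.
\item Colour all pairs of $\{0,1,\ldots,N-1\}$ uniformly at random, with $N=\lfloor 2^{c_0 n}\rfloor$.
\item Apply a union bound over all $n$-subsets.
\end{enumerate}

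For the counting step, I would encode a good colouring as a permutation. Given a good $\psi$ on $A$, define a tournament on $A$: for $i<j$, orient $a_i\to a_j$ if $\psi(a_i,a_j)$ is red, and $a_j\to a_i$ if it is blue. I claim this tournament is transitive, i.e.\ has no directed $3$-cycle.
\begin{itemize}
\item Take $i<j<k$. A directed $3$-cycle must be either $a_i\to a_j\to a_k\to a_i$ or its reverse.
\item The first cycle means $\psi(a_i,a_j)=\psi(a_j,a_k)=$ red and $\psi(a_i,a_k)=$ blue, which is exactly the forbidden pattern.
\item The reverse cycle means $\psi(a_i,a_j)=\psi(a_j,a_k)=$ blue and $\psi(a_i,a_k)=$ red, again the forbidden pattern.
\end{itemize}
Hence goodness is equivalent to transitivity of this tournament. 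A transitive tournament on $n$ labelled vertices is a linear order, and the colouring is recovered from that order. So there are at most $n!$ good colourings; in fact the map is a bijection onto permutations.

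For the probabilistic step, a fixed $n$-set receives a good colouring with probability at most $n!\,2^{-\binom n2}$. The expected number of $n$-sets with a good colouring is therefore at most
\[
\binom{N}{n}\, n!\, 2^{-\binom n2}\le N^n 2^{-n(n-1)/2}.
\]
This is less than $1$ whenever $N< 2^{(n-1)/2}$. Taking $c_0$ small (say $c_0=1/4$ for $n\ge 5$) and $N=\lfloor 2^{c_0n}\rfloor$, some colouring $\phi$ exists in which every $n$-set contains the required triple $a_i<a_j<a_k$ with $\phi(a_i,a_j)=\phi(a_j,a_k)\ne\phi(a_i,a_k)$.

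The only nonroutine point is the structural count in the first step. The natural first attempt, extending a good colouring one vertex at a time while tracking the admissible red neighbourhoods of the new vertex, is messy. The tournament/transitivity reformulation makes the count immediate. Even a cruder bound of $2^{(1/2-\varepsilon)n^2}$ on the number of good colourings would suffice for some absolute $c_0>0$.
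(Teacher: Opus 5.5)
Your proof is correct, and it takes a different route from the paper's sketch.

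The paper does not count the colourings of an $n$-set $A$ that avoid the pattern. Instead it fixes a partial Steiner triple system on $A$ with $\Omega(n^2)$ pair-disjoint triples. Because no two triples share a pair, the events ``this triple fails the pattern'' are mutually independent, each with probability $3/4$. This bounds the probability that $A$ has no good triple by $(3/4)^{\Omega(n^2)}$, and the same union bound over $\binom{N}{n}$ sets follows.

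Your tournament reformulation is sound. The two forbidden patterns, red-red-blue and blue-blue-red on $(a_ia_j,a_ja_k,a_ia_k)$, are exactly the two directed triangles on $\{a_i,a_j,a_k\}$. So pattern-free colourings are in bijection with transitive tournaments, and there are exactly $n!$ of them. This gives the sharper probability $n!\,2^{-\binom n2}=2^{-(1/2-o(1))n^2}$.

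What your route buys:
\begin{itemize}
\item It needs no packing result.
\item It gives an explicit constant: $c_0=1/4$ works for all $n\ge 3$, and any $c_0<1/2$ works for large $n$. The paper's argument yields only some unspecified small $c_0$.
\end{itemize}

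What the paper's route buys is generality. Its independence argument works verbatim for any prescribed colour pattern on triples that occurs with positive probability. Your exact count instead relies on the special fact that avoiding this particular pattern is equivalent to transitivity.
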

{\bf Proof sketch.} We randomly color each pair of elements independently red/blue. Fix a partial Steiner $(n,3,2)$-system \cite{E-H-2} with $\Omega(n^2)$ edges such that no two edges share a pair. Since the edges are pair-disjoint, the corresponding bad events (a $3$-tuple $a_i<a_j<a_k$ that violates $\phi(a_i,a_j)=\phi(a_j,a_k)\ne\phi(a_i,a_k)$) are independent, giving an upper bound of $(3/4)^{\Omega(n^2)}$ on the probability that a given $n$-set contains no good triple. There are $\binom{D}{n}$ such $n$-sets where $D \approx 2^{c_0 n}$. Choosing $c_0$ sufficiently small makes the expected number of $n$-sets without a good triple less than $1$. Hence, there exists a coloring in which every $n$-set contains such a ``good'' triple. \hfill$\Box$

\medskip

Let $c_0>0$ be the constant from Lemma \ref{phi}, and let $U=\{0,1,\ldots,\lfloor2^{c_0n}\rfloor-1\}$ and $\phi:\binom{U}{2}\to \{\text{red},\text{blue}\}$ be a $2$-coloring of the pairs of $U$ satisfying the properties given in the lemma. Now, let $N=2^{\lfloor2^{c_0n}\rfloor}$ and $V(H)=\{0,1,\ldots,N-1\}$. Then we shall use the coloring
$\phi$ to produce a $K^{(4)}_5$-free $4$-graph $H$ on $V(H)$ with $\alpha(H)<2^6n^5+1$ as follows. For any $4$-tuple $e=(v_1,v_2,v_3,v_4)$ of $V(H)$, set $e\in E(H)$ if and only if one of the following holds:
\begin{enumerate}
\item[\textbf{(i)}] $\delta_1,\delta_2,\delta_3$ form a monotone sequence and $\phi(\delta_1,\delta_2)=\phi(\delta_2,\delta_3)\ne\phi(\delta_1,\delta_3)$.

\item[\textbf{(ii)}] $\delta_1>\delta_2<\delta_3$, $\delta_1>\delta_3$ and $\phi(\delta_1,\delta_2)\ne\phi(\delta_2,\delta_3)$.
\item[\textbf{(iii)}] $\delta_1>\delta_2<\delta_3$, $\delta_1<\delta_3$ and $\phi(\delta_1,\delta_2)=\phi(\delta_1,\delta_3)=\phi(\delta_2,\delta_3)$.

\end{enumerate}

\subsection{$H$ is $K_5^{(4)}$-free }

In this subsection, we show that $H$ is $K_5^{(4)}$-free.
To see this, suppose to the contrary that $P =(v_1, \ldots, v_5)$ induces a $K_5^{(4)}$ in $H$. This will lead to a contradiction. Recall that $\delta(P)=\{\delta_1,\delta_2,\delta_3,\delta_4\}$.

\begin{claim}\label{no-mono}
$\delta(P)$ is not a monotone sequence.
\end{claim}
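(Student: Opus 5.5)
Suppose, for contradiction, that $\delta(P)$ is monotone, say $\delta_1<\delta_2<\delta_3<\delta_4$; the decreasing case is symmetric. The plan is to compute the $\delta$-sequences of all five $4$-subsets of $P$ and find a pair-colour contradiction.

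\textbf{Step 1: the $\delta$-sequences.} By Property B, deleting a vertex $v_i$ with $2\le i\le 4$ merges $\delta_{i-1}$ and $\delta_i$ into their maximum. Since the sequence is increasing, this maximum is $\delta_i$. Deleting $v_1$ or $v_5$ simply drops $\delta_1$ or $\delta_4$. Hence the five $4$-subsets have the sequences
\[
(\delta_2,\delta_3,\delta_4),\ (\delta_1,\delta_3,\delta_4),\ (\delta_1,\delta_2,\delta_4),\ (\delta_1,\delta_2,\delta_3),
\]
together with $(\delta_2,\delta_3,\delta_4)$ again, from deleting $v_2$.

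\textbf{Step 2: apply condition (i).} Each of these sequences is monotone, so each $4$-tuple can lie in $E(H)$ only through condition \textbf{(i)}. Therefore, for every triple $x<y<z$ among $\{\delta_1,\delta_2,\delta_3,\delta_4\}$ we obtain
\[
\phi(x,y)=\phi(y,z)\ne\phi(x,z).
\]

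\textbf{Step 3: derive the contradiction.} Write $\phi_{ij}=\phi(\delta_i,\delta_j)$.
\begin{itemize}
\item The triple $(1,2,3)$ gives $\phi_{12}=\phi_{23}$ and $\phi_{13}\ne\phi_{12}$.
\item The triple $(2,3,4)$ gives $\phi_{34}=\phi_{23}$, so $\phi_{34}=\phi_{12}$.
\item The triple $(1,3,4)$ requires $\phi_{13}=\phi_{34}=\phi_{12}$, which contradicts $\phi_{13}\ne\phi_{12}$.
\end{itemize}
With only two colours, this already closes the argument, so the triple $(1,2,4)$ is not even needed.

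There is no real obstacle here. The only care needed is in Step 1: checking via Property B that each merged value is the maximum and that monotonicity is preserved after deletion. This is what forces condition \textbf{(i)} to be the one that applies in every case.
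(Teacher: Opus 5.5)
Your proof is correct and is essentially the paper's argument. The paper uses exactly the same three $4$-tuples $(v_1,v_2,v_3,v_4)$, $(v_2,v_3,v_4,v_5)$ and $(v_1,v_2,v_4,v_5)$ (your triples $(1,2,3)$, $(2,3,4)$, $(1,3,4)$), derives $\phi(\delta_2,\delta_3)\ne\phi(\delta_1,\delta_3)$ and $\phi(\delta_2,\delta_3)=\phi(\delta_3,\delta_4)$, and concludes that \textbf{(i)} fails for $(v_1,v_2,v_4,v_5)$, while your Property B bookkeeping and the symmetry argument for the decreasing case are accurate.
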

\noindent\textbf{Proof.}
Suppose, to the contrary, that $\delta(P)$ is increasing; the decreasing case is analogous. By \textbf{(i)} and $\delta(v_1,v_2,v_3,v_4)=(\delta_1<\delta_2<\delta_3)$, we have $\phi(\delta_2,\delta_3)\ne\phi(\delta_1,\delta_3)$. Moreover, by \textbf{(i)} and $\delta(v_2,v_3,v_4,v_5)=(\delta_2<\delta_3<\delta_4)$, we have $\phi(\delta_2,\delta_3)=\phi(\delta_3,\delta_4)$. Therefore, $(v_1,v_2,v_4,v_5)\notin E(H[P])$ since $\delta(v_1,v_2,v_4,v_5)=(\delta_1<\delta_3<\delta_4)$ and $\phi(\delta_1,\delta_3)\ne\phi(\delta_3,\delta_4)$, a contradiction.
\hfill$\Box$

\medskip
By Claim \ref{no-mono}, there exists $i\in \{2,3\}$ such that $\delta_i$ is a local extremum. If $\delta_i$ is a local maximum, then $(v_{i-1},v_i,v_{i+1},v_{i+2})\notin E(H[P])$ since $\delta(v_{i-1},v_i,v_{i+1},v_{i+2})=(\delta_{i-1}<\delta_i>\delta_{i+1})$, a contradiction. Thus, we may assume that $\delta_i$ is a local minimum.

\medskip
\textbf{Case 1.~} $i=2$, i.e., $\delta(P)=(\delta_1>\delta_2<\delta_3<\delta_4)$.
\medskip

Since $(v_2,v_3,v_4,v_5)\in E(H[P])$ and $\delta(v_2,v_3,v_4,v_5)=(\delta_2<\delta_3<\delta_4)$, \textbf{(i)} gives $\phi(\delta_2,\delta_3)=\phi(\delta_3,\delta_4)\ne\phi(\delta_2,\delta_4)$.
If $\delta_1<\delta_3$, then applying \textbf{(iii)} to $(v_1,v_2,v_3,v_5)$ and $(v_1,v_2,v_3,v_4)$ gives $\phi(\delta_1,\delta_2)=\phi(\delta_2,\delta_4)$ and $\phi(\delta_1,\delta_2)=\phi(\delta_2,\delta_3)$. This contradicts $\phi(\delta_2,\delta_3)\ne\phi(\delta_2,\delta_4)$.
If $\delta_1>\delta_4$, then applying \textbf{(ii)} to the same two $4$-tuples yields
$\phi(\delta_1,\delta_2)\ne\phi(\delta_2,\delta_4)$ and $\phi(\delta_1,\delta_2)\ne\phi(\delta_2,\delta_3)$.
Since only two colors are available, we must have $\phi(\delta_2,\delta_3)=\phi(\delta_2,\delta_4)$, again a contradiction.

Therefore, by Property C, we have $\delta_4>\delta_1>\delta_3$.
Then applying \textbf{(iii)} to $(v_1,v_2,v_3,v_5)$ and $(v_1,v_2,v_4,v_5)$ gives $\phi(\delta_1,\delta_4)=\phi(\delta_2,\delta_4)$ and $\phi(\delta_1,\delta_4)=\phi(\delta_3,\delta_4)$. This contradicts $\phi(\delta_3,\delta_4)\ne\phi(\delta_2,\delta_4)$.

\medskip
\textbf{Case 2.~} $i=3$, i.e., $\delta(P)=(\delta_1>\delta_2>\delta_3<\delta_4)$.
\medskip

Since $(v_1,v_2,v_3,v_4)\in E(H[P])$ and $\delta(v_1,v_2,v_3,v_4)=(\delta_1>\delta_2>\delta_3)$, \textbf{(i)} gives $\phi(\delta_1,\delta_2)=\phi(\delta_2,\delta_3)\ne\phi(\delta_1,\delta_3)$.
If $\delta_2>\delta_4$,  applying \textbf{(ii)} to $(v_1,v_3,v_4,v_5)$ gives $\phi(\delta_3,\delta_4)\ne\phi(\delta_1,\delta_3)$, since $\delta(v_1,v_3,v_4,v_5)=(\delta_1>\delta_3<\delta_4)$ and $\delta_1>\delta_4$. Similarly, applying \textbf{(ii)} to $(v_2,v_3,v_4,v_5)$ gives $\phi(\delta_3,\delta_4)\ne\phi(\delta_2,\delta_3)$, contradicting $\phi(\delta_2,\delta_3)\neq\phi(\delta_1,\delta_3)$.
If $\delta_1<\delta_4$,  applying \textbf{(iii)} to $(v_1,v_3,v_4,v_5)$ gives $\phi(\delta_3,\delta_4)=\phi(\delta_1,\delta_3)$. Similarly, applying \textbf{(iii)} to $(v_2,v_3,v_4,v_5)$ gives $\phi(\delta_3,\delta_4)=\phi(\delta_2,\delta_3)$, contradicting $\phi(\delta_2,\delta_3)\neq\phi(\delta_1,\delta_3)$.

Therefore, by Property C, we have $\delta_1>\delta_4>\delta_2$.
Now $\delta(v_1,v_2,v_3,v_5)=(\delta_1>\delta_2<\delta_4)$ and $\delta_1>\delta_4$, so \textbf{(ii)} gives $\phi(\delta_1,\delta_2)\neq\phi(\delta_2,\delta_4)$. On the other hand, applying \textbf{(iii)} to $(v_2,v_3,v_4,v_5)$ gives $\phi(\delta_2,\delta_3)=\phi(\delta_2,\delta_4)$, contradicting $\phi(\delta_1,\delta_2)=\phi(\delta_2,\delta_3)$.

\medskip

This shows that $H$ is $K_5^{(4)}$-free.

\subsection{$\alpha(H)<2^{6}n^{5}+1$}
Now we show that $\alpha(H)<2^{6}n^{5}+1$. Suppose to the contrary that there is a set $Q=(v_1,v_2,\ldots ,v_m)$ of $m=2^{6}n^{5}+1$ vertices that induces an independent set in $H$. Recall that $\delta_i=\delta(v_i,v_{i+1})$, and hence $\delta(Q)=\{\delta_1,\delta_2,\dots,\delta_{m-1}\}$.

\begin{lemma}\label{no-mono-n}
There is no monotone subsequence $\{\delta_{i_1},\delta_{i_2},\dots,\delta_{i_n}\}\subset\delta(Q)$ such that for any $a,b,c \in [n]$ with $a<b<c$, there exists $(u_1, u_2, u_3, u_4)\subset Q$ such that $\delta(u_1, u_2, u_3, u_4)=(\delta_{i_a},\delta_{i_b},\delta_{i_c})$.
\end{lemma}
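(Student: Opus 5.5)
The plan is to argue by contradiction and feed the monotone subsequence directly into Lemma \ref{phi}. Suppose there is a monotone subsequence $\delta_{i_1},\delta_{i_2},\dots,\delta_{i_n}$ of $\delta(Q)$ with the stated property. Put $A=\{\delta_{i_1},\dots,\delta_{i_n}\}$. Since the subsequence is monotone, these are $n$ distinct elements. Each $\delta_j$ lies in $\{0,1,\ldots,\lfloor 2^{c_0n}\rfloor-1\}=U$, because $N=2^{\lfloor 2^{c_0n}\rfloor}$ and $\delta(a,b)<\log_2 N$. So $A$ is an $n$-subset of $U$.

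By Lemma \ref{phi}, $A$ contains a triple $x<y<z$ with $\phi(x,y)=\phi(y,z)\neq\phi(x,z)$. Write these as $\delta_{i_a},\delta_{i_b},\delta_{i_c}$ for some $a<b<c$ in $[n]$. The monotone direction decides the order:
\begin{itemize}
\item if the subsequence is increasing, then $(x,y,z)=(\delta_{i_a},\delta_{i_b},\delta_{i_c})$;
\item if it is decreasing, then $(x,y,z)=(\delta_{i_c},\delta_{i_b},\delta_{i_a})$.
\end{itemize}
In both cases $\delta_{i_b}$ is the middle value, so the condition reads $\phi(\delta_{i_a},\delta_{i_b})=\phi(\delta_{i_b},\delta_{i_c})\ne\phi(\delta_{i_a},\delta_{i_c})$. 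This uses that $\phi$ is a coloring of unordered pairs, so the condition is symmetric under reversing the triple.

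By hypothesis there is a 4-tuple $(u_1,u_2,u_3,u_4)\subset Q$ with $\delta(u_1,u_2,u_3,u_4)=(\delta_{i_a},\delta_{i_b},\delta_{i_c})$. This sequence is monotone, since it is a subsequence of a monotone sequence. Together with the color condition above, rule \textbf{(i)} puts $(u_1,u_2,u_3,u_4)$ in $E(H)$. This contradicts $Q$ being independent.

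There is no real obstacle here. The only points needing care are that the values $\delta_j$ lie in the ground set $U$ of $\phi$, and that the decreasing case matches the symmetric form of the condition in \textbf{(i)}.
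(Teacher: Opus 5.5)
Your proposal is correct and is exactly the paper's argument: apply Lemma \ref{phi} to the $n$ values of the monotone subsequence, use the hypothesis to realize the resulting triple as $\delta(u_1,u_2,u_3,u_4)$, and conclude by rule \textbf{(i)} that $Q$ contains an edge. Your explicit checks that each $\delta_j$ lies in $U$ and that the decreasing case follows from the symmetry of $\phi$ on unordered pairs simply fill in what the paper covers with ``without loss of generality.''
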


\noindent\textbf{Proof.}
Without loss of generality, suppose to the contrary that $\{\delta_{i_1},\delta_{i_2},\dots,\delta_{i_n}\}$ is such a monotone increasing subsequence. By Lemma \ref{phi}, there exists a $3$-tuple $\delta_{i_a},\delta_{i_b},\delta_{i_c}$ with $\delta_{i_a}<\delta_{i_b}<\delta_{i_c}$ such that $$\phi(\delta_{i_a},\delta_{i_b})=\phi(\delta_{i_b},\delta_{i_c})\neq\phi(\delta_{i_a},\delta_{i_c}).$$
By the assumption on the subsequence, there exists $(u_1, u_2, u_3, u_4)\subset Q$ with $\delta(u_1, u_2, u_3, u_4)=(\delta_{i_a},\delta_{i_b},\delta_{i_c})$. Then  by \textbf{(i)}, $(u_1, u_2, u_3, u_4)$ forms an edge in $H$, contradicting that $Q$ is independent.
\hfill$\Box$

\medskip
Let $\beta_i=\frac{m-1}{(2n)^i}$, for $i\in[0,5]$. Since $m-1=2^6n^5$, each $\beta_i$ is an integer.
For $t\in[5]$, we will greedily construct \emph{$t$-layer local maxima sequences} $\Delta^{(t)}$ such that $\Delta^{(t)}\subset\Delta^{(t-1)}$, starting with $\Delta^{(0)}=\delta(Q)$, and such that the following property holds. (We do not require that the elements of $\Delta^{(t)}$ be distinct.)

\begin{itemize}
\item[($\ast$)] For two consecutive elements $\delta_a$, $\delta_b\in\Delta^{(t)}$, we have $\delta_x<\max\{\delta_a ,\delta_b\}$ for all $a<x<b$, and hence $\delta_a\neq\delta_b$.
\end{itemize}

For $t\ge1$, assume that $\Delta^{(t-1)}$ has been constructed with property ($\ast$). We now define $\Delta^{(t)}$ as the first $\beta_t$ local maxima with respect to  $\Delta^{(t-1)}$. In what follows, we abbreviate ``with respect to" as ``w.r.t." for convenience.

We first claim that $\Delta^{(t-1)}$ contains no monotone consecutive subsequence of length $n$. On the contrary, such a subsequence $Q'$ exists. Without loss of generality, assume $Q'$ is increasing. For any $\delta_{j_1},\delta_{j_2},\delta_{j_3}\in Q'$ with $j_1<j_2<j_3$, by the first part of property ($\ast$) of $\Delta^{(t-1)}$ and Property B, we have $\delta(v_{j_1},v_{j_1+1},v_{j_2+1},v_{j_3+1})=(\delta_{j_1}<\delta_{j_2}<\delta_{j_3})$, which contradicts Lemma \ref{no-mono-n}. Therefore, by the second part of property ($\ast$), we may take $\Delta^{(t)}$ to be the first $\frac{\beta_{t-1}}{2n} = \beta_t$ local maxima (w.r.t. $\Delta^{(t-1)}$). Consequently, $\Delta^{(t)} \subset \Delta^{(t-1)}$.

To show the  property ($\ast$) for $\Delta^{(t)}$,  we consider two consecutive elements $\delta_a$, $\delta_b\in\Delta^{(t)}$ and  we may assume that $\delta_{a}=\delta_{i_0},\delta_{i_1},\delta_{i_2},\cdots,\delta_{i_j},\delta_{b}=\delta_{i_{j+1}}$ are consecutive elements in $\Delta^{(t-1)}$. Note that $\delta_{a}$ and $\delta_{b}$ are consecutive local maxima (w.r.t. $\Delta^{(t-1)}$), we have $\delta_{i_\ell}<\max\{\delta_a, \delta_b\}$ for $\ell\in[j]$. Furthermore, it follows from the inductive hypothesis that $\delta_x<\max\{\delta_{i_\ell},\delta_{i_{\ell+1}}\}$ for all $i_\ell<x<i_{\ell+1}$ and $0\leq\ell \leq j$, and so $\delta_x<\max\{\delta_{i_\ell},\delta_{i_{\ell+1}}\}\leq\max\{\delta_{a},\delta_{b}\}$. Thus, $\delta_x<\max\{\delta_{a},\delta_{b}\}$ for all $a<x<b$. Moreover, Property A implies that $\delta_a\neq\delta_b$, as desired. Otherwise $\delta(v_a,v_b)=\delta_a=\delta_b=\delta(v_b,v_{b+1})$, a contradiction.

\medskip
Now let $t\in[5]$ and $\delta_j\in\Delta^{(t)}\setminus\Delta^{(t+1)}$ where $\Delta^{(6)}=\emptyset$.
Then $\delta_j$ is a local maximum (w.r.t. $\Delta^{(t-1)}$). Denote by $\delta_{j^-}$ and $\delta_{j^+}$ the \textbf{closest} elements to the left and right of $\delta_j$ in the sequence $\Delta^{(t-1)}$, respectively. Thus,
$\delta_{j^-},\delta_{j^+}<\delta_{j}$, and in particular $\delta_{j^-},\delta_{j^+}\in \Delta^{(t-1)}\setminus\Delta^{(t)}$. By repeatedly applying property ($\ast$) in the above greedy construction, we obtain the following observation.

\begin{observation}\label{observation-1}
For $t\in [5]$ and  $\delta_j\in \Delta^{(t)}\setminus\Delta^{(t+1)}$,
we have $\delta_x<\delta_{j}$ for each $x\in[j^-,j^+]\setminus\{j\}$.
\end{observation}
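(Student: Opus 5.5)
The plan is to prove Observation \ref{observation-1} directly from property ($\ast$) for $\Delta^{(t-1)}$, which was established inductively in the greedy construction. Fix $t\in[5]$ and $\delta_j\in\Delta^{(t)}\setminus\Delta^{(t+1)}$. By definition of $\Delta^{(t)}$, $\delta_j$ is a local maximum w.r.t.\ $\Delta^{(t-1)}$. Its neighbours $\delta_{j^-}$ and $\delta_{j^+}$ are the elements of $\Delta^{(t-1)}$ immediately to the left and right of it, so $\delta_{j^-}<\delta_j$ and $\delta_{j^+}<\delta_j$. (The membership $\delta_j\notin\Delta^{(t+1)}$ plays no role here; it is only bookkeeping used later.)

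The argument splits the index interval $[j^-,j^+]\setminus\{j\}$ into three parts. The endpoints $x=j^-$ and $x=j^+$ satisfy $\delta_x<\delta_j$ by the local maximum property just noted. For an interior index $j^-<x<j$, the elements $\delta_{j^-}$ and $\delta_j$ are consecutive in $\Delta^{(t-1)}$. Property ($\ast$) for $\Delta^{(t-1)}$ therefore gives $\delta_x<\max\{\delta_{j^-},\delta_j\}=\delta_j$. The case $j<x<j^+$ is symmetric, using the consecutive pair $\delta_j,\delta_{j^+}$. Together these give $\delta_x<\delta_j$ for every $x\in[j^-,j^+]\setminus\{j\}$.

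The one point that needs care is the base case $t=1$. There $\Delta^{(0)}=\delta(Q)$, so $j^-=j-1$ and $j^+=j+1$, and there are no interior indices. Property ($\ast$) holds vacuously for $\Delta^{(0)}$, so this case is immediate. I would also check that $j^\pm$ exist, i.e.\ that $\delta_j$ is not an endpoint of $\Delta^{(t-1)}$. This is part of the definition of a local maximum, which requires neighbours on both sides. Beyond this, I do not expect any real obstacle: the "repeated application" of ($\ast$) mentioned in the text is already packaged into the inductive proof of ($\ast$) for each layer.
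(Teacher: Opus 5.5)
Your proof is correct and takes the same route as the paper. The paper also deduces the observation from $\delta_{j^-},\delta_{j^+}<\delta_j$ (the local-maximum property w.r.t.\ $\Delta^{(t-1)}$) together with property ($\ast$) for $\Delta^{(t-1)}$ on the consecutive pairs $(\delta_{j^-},\delta_j)$ and $(\delta_j,\delta_{j^+})$; its ``repeated application'' of ($\ast$) is already built into the inductive proof of ($\ast$), as you note.
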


Note that $|\Delta^{(5)}|=\beta_5=2$. Let $\Delta^{(5)}=\{\delta_{a_1},\delta_{a_2}\}$ where $a_1<a_2$.
By property ($\ast$), it follows that $\delta_{a_1}\neq\delta_{a_2}$.

We first consider the case that $\delta_{a_1}<\delta_{a_2}$.
Define $\delta_{b_1}=\delta_{a_1^+}\in\Delta^{(4)}$,  $\delta_{b_2}=\delta_{b_1^+}\in\Delta^{(3)}$.
Then $a_1<b_1<b_2<a_2$ and $\delta_{b_2}<\delta_{b_1}<\delta_{a_1}<\delta_{a_2}$. By the pigeonhole principle, we have either $\phi(\delta_{a_1},\delta_{a_2})=\phi(\delta_{b_1},\delta_{a_2})$, or $\phi(\delta_{a_1},\delta_{a_2})=\phi(\delta_{b_2},\delta_{a_2})$, or $\phi(\delta_{b_1},\delta_{a_2})=\phi(\delta_{b_2},\delta_{a_2})$.
We give the details for the case
$\phi(\delta_{a_1},\delta_{a_2})=\phi(\delta_{b_2},\delta_{a_2})$.
The other two cases are handled in exactly the same way: for example, if $\phi(\delta_{a_1},\delta_{a_2})=\phi(\delta_{b_1},\delta_{a_2})$,
then one replaces $b_2$ with $b_1$, and starts the subsequent construction from $b_1$. Now we let $\delta_c=\delta_{b_2^-}\in \Delta^{(2)}$, $\delta_d=\delta_{c^+}\in \Delta^{(1)}$, $\delta_e=\delta_{d^-}\in \delta(Q)$. Recall that $Q=(v_1,v_2,\ldots ,v_m)$ induces an independent set in $H$.

\begin{claim}\label{ind-1}
(i) For every $c\le x<b_2$, we have $\phi(\delta_x,\delta_{b_2})\ne\phi(\delta_{b_2},\delta_{a_2})$.

\medskip
(ii) For every $e\le x<d$, we have $\phi(\delta_x,\delta_{d})\ne\phi(\delta_{d},\delta_{b_2})$.
\end{claim}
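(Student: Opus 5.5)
The plan is to argue by contradiction, using only the fact that $Q$ is independent. For each $x$ we exhibit three $4$-tuples of $Q$ whose $\delta$-patterns fall under \textbf{(i)}, \textbf{(ii)} and \textbf{(iii)} respectively. If the inequality in the claim failed, the three resulting non-edge conditions would together force one of these tuples to be an edge of $H$.

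\emph{Locating the indices.} By property ($\ast$) for $\Delta^{(5)}$ we have $\delta_y<\delta_{a_2}$ for all $a_1<y<a_2$, and $\delta_{a_1}$ is the largest $\delta_y$ on $[a_1,a_2)$. Since $b_1\in\Delta^{(4)}\setminus\Delta^{(5)}$, its neighbour $b_2=b_1^+$ in $\Delta^{(3)}$ lies in $\Delta^{(3)}\setminus\Delta^{(4)}$. Hence $c=b_2^-\in\Delta^{(2)}$ satisfies $b_1\le c<b_2$, and Observation \ref{observation-1} gives $\delta_y<\delta_{b_2}$ for $y\in[c,b_2)$. Likewise $c\in\Delta^{(2)}\setminus\Delta^{(3)}$, so $d=c^+$ satisfies $\delta_d<\delta_c<\delta_{b_2}$, and $\delta_y<\delta_c$ on $(c,d]$. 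By Property B we therefore get, for $c\le x<b_2$:
\[
\delta(v_{a_1},v_x)=\delta_{a_1},\qquad \delta(v_{x+1},v_{b_2+1})=\delta_{b_2},\qquad \delta(v_{x+1},v_{a_2+1})=\delta_{a_2}.
\]
The second of these uses that $\delta_{b_2}$ dominates $[x+1,b_2]$; the third uses that $\delta_{a_2}$ dominates $(a_1,a_2]$. The inequality $a_1<c\le x$ keeps all vertices distinct.

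\emph{Part (i).} Write $R=\phi(\delta_{b_2},\delta_{a_2})=\phi(\delta_{a_1},\delta_{a_2})$; this equality is the assumed pigeonhole case. Suppose $\phi(\delta_x,\delta_{b_2})=R$.
\begin{itemize}
\item The tuple $(v_x,v_{x+1},v_{b_2+1},v_{a_2+1})$ has increasing pattern $(\delta_x<\delta_{b_2}<\delta_{a_2})$. Since it is not an edge, \textbf{(i)} forces $\phi(\delta_x,\delta_{a_2})=R$.
\item The tuple $(v_{a_1},v_x,v_{x+1},v_{b_2+1})$ has pattern $(\delta_{a_1}>\delta_x<\delta_{b_2})$ with $\delta_{a_1}>\delta_{b_2}$. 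Non-edge via \textbf{(ii)} gives $\phi(\delta_{a_1},\delta_x)=\phi(\delta_x,\delta_{b_2})=R$.
\item The tuple $(v_{a_1},v_x,v_{x+1},v_{a_2+1})$ has pattern $(\delta_{a_1}>\delta_x<\delta_{a_2})$ with $\delta_{a_1}<\delta_{a_2}$. All three relevant colours now equal $R$, so by \textbf{(iii)} it is an edge, a contradiction.
\end{itemize}
When $x+1=b_2$ the tuples are still $4$ distinct vertices.

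\emph{Part (ii).} Repeat the argument one level down, with $(c,d,b_2)$ in the roles of $(a_1,x\text{-range top},a_2)$. The needed analogue of $\phi(\delta_{a_1},\delta_{a_2})=\phi(\delta_{b_2},\delta_{a_2})$ is $\phi(\delta_c,\delta_{b_2})=\phi(\delta_d,\delta_{b_2})$. This follows from part (i) applied to $x=c$ and $x=d$ (both in $[c,b_2)$), since both colours differ from $R$ and there are only two colours. The ordering facts are $\delta_c$ dominating $[c,d)$, $\delta_d<\delta_c<\delta_{b_2}$, $\delta_{b_2}$ dominating $(c,b_2]$, and, since $e=d^-=d-1$, also $\delta_x<\delta_d$ for $e\le x<d$. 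These give the same three patterns with $a_1\to c$, $b_2\to d$, $a_2\to b_2$.

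The main obstacle is the degenerate case $x=c$, where $v_c$ and $v_x$ coincide. It can only occur when $e=c$, i.e.\ $d=c+1$, and then $\delta_x=\delta_c>\delta_d$, so $\delta_x<\delta_d$ fails and the argument above does not apply. I expect this to be handled separately: directly via the increasing tuple $(v_c,v_{c+1},v_{d+1},v_{b_2+1})$ together with part (i), or by checking that $e>c$ is forced by the construction. Carefully verifying all the Property B maxima is the other point needing care.
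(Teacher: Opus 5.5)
Your proposal is the paper's proof. In part (i) you use the same three tuples $(v_x,v_{x+1},v_{b_2+1},v_{a_2+1})$, $(v_{a_1},v_x,v_{x+1},v_{b_2+1})$ and $(v_{a_1},v_x,v_{x+1},v_{a_2+1})$, with rules \textbf{(i)}, \textbf{(ii)}, \textbf{(iii)} in the same order. In part (ii) you get $\phi(\delta_c,\delta_{b_2})=\phi(\delta_d,\delta_{b_2})$ from (i) and rerun the argument with $(a_1,b_2,a_2)\to(c,d,b_2)$, which is exactly the paper's ``similar argument''.

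Two small corrections are needed.

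\begin{itemize}
\item Your assertion that $\delta_{a_1}$ is the largest $\delta_y$ on $[a_1,a_2)$ is false in general. Property ($\ast$) for $\Delta^{(5)}$ only bounds these values by $\delta_{a_2}$. Between the consecutive local maxima $a_1$ and $a_2$, the sequence $\Delta^{(4)}$ decreases and then increases toward $\delta_{a_2}>\delta_{a_1}$, so entries near $a_2$ may exceed $\delta_{a_1}$. What you actually use is only $\delta_y<\delta_{a_1}$ for $a_1<y\le b_2$. This follows from Observation~\ref{observation-1} applied to $a_1$, which covers $(a_1,b_1]$, and applied to $b_1$, which covers $(b_1,b_2]$, together with $\delta_{b_1}<\delta_{a_1}$.
\item The degenerate case $e=c$ that you leave open cannot occur. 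Since $d\in\Delta^{(1)}\setminus\Delta^{(2)}$, Observation~\ref{observation-1} gives $\delta_e=\delta_{d-1}<\delta_d<\delta_c$, so $e\neq c$. Hence $c<e$, and your part (ii) argument applies verbatim.
\end{itemize}
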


\noindent\textbf{Proof.}
(i) Suppose, to the contrary, that $\phi(\delta_x,\delta_{b_2})=\phi(\delta_{b_2},\delta_{a_2})$ for some $c\le x<b_2$. We first show that $\phi(\delta_x,\delta_{a_2})=\phi(\delta_{b_2},\delta_{a_2})$. Indeed, otherwise $\phi(\delta_x,\delta_{a_2})\neq\phi(\delta_{b_2},\delta_{a_2})=\phi(\delta_x,\delta_{b_2})$ which together with Observation \ref{observation-1} and \textbf{(i)} would imply $(v_x,v_{x+1},v_{b_2+1},v_{a_2+1})\in E(H[Q])$, a contradiction.
Next, $\phi(\delta_{a_1},\delta_{x})=\phi(\delta_{x},\delta_{b_2})$;
otherwise $\phi(\delta_{a_1},\delta_{x})\neq\phi(\delta_{x},\delta_{b_2})$, and Observation \ref{observation-1} together with \textbf{(ii)} would give $(v_{a_1},v_{x},v_{x+1},v_{b_2+1})\in E(H[Q])$, again a contradiction.
Recall that $\phi(\delta_{a_1},\delta_{a_2})=\phi(\delta_{b_2},\delta_{a_2})$. It follows that $\phi(\delta_{a_1},\delta_{x})=\phi(\delta_x,\delta_{a_2})=\phi(\delta_{a_1},\delta_{a_2})$.
Then Observation \ref{observation-1} and \textbf{(iii)} implies $(v_{a_1},v_x,v_{x+1},v_{a_2+1})\in E(H[Q])$, a contradiction.

\medskip
(ii) By (i) and the fact that $\phi$ uses only two colors, it follows that $\phi(\delta_c,\delta_{b_2})=\phi(\delta_d,\delta_{b_2})$.
By a similar argument as above, we can obtain the desired result.
\hfill$\Box$

\medskip
By Claim~\ref{ind-1}(i) and the fact that $\phi$ uses only two colors, it follows that $\phi(\delta_e,\delta_{b_2})=\phi(\delta_d,\delta_{b_2})$. Then Claim~\ref{ind-1}(ii) forces $\phi(\delta_e,\delta_{d})\neq\phi(\delta_d,\delta_{b_2})$.
Then applying \textbf{(ii)} to $(v_{a_1},v_e,v_{e+1},v_{d+1})$ gives $\phi(\delta_{a_1},\delta_e)=\phi(\delta_e,\delta_{d})$.
Therefore $\phi(\delta_{a_1},\delta_e)\neq\phi(\delta_e,\delta_{b_2})$.
Now observe that $\delta(v_{a_1},v_e,v_{e+1},v_{b_2+1})=(\delta_{a_1}>\delta_e<\delta_{b_2})$ and $\delta_{a_1}>\delta_{b_2}.$
Thus by \textbf{(ii)}, $(v_{a_1},v_e,v_{e+1},v_{b_2+1})\in E(H[Q])$, a contradiction.

\medskip
Now we may assume $\delta_{a_1}>\delta_{a_2}$.
We set $\delta_{b}=\delta_{a_2^-}\in\Delta^{(4)}$,  $\delta_{c}=\delta_{b^+}\in\Delta^{(3)}$, $\delta_d=\delta_{c^-}\in \Delta^{(2)}$, $\delta_e=\delta_{d^+}\in \Delta^{(1)}$ and $\delta_f=\delta_{e^-}\in \delta(Q)$.

\begin{claim}\label{ind-2}
(i) For every $b< x\leq c$, we have $\phi(\delta_b,\delta_{x})\ne\phi(\delta_{a_1},\delta_b)$.

\medskip
(ii) For every $d< x\leq e$, we have $\phi(\delta_d,\delta_{x})\ne\phi(\delta_{b},\delta_d)$.
\end{claim}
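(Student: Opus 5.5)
The plan is to prove both parts by the same contradiction argument. We fix an index $x$ in the stated range, assume the forbidden colour equality, and then exhibit four $4$-subsets of $Q$ whose non-edges in $H$ force three colours to coincide in a way that \textbf{(iii)} forbids. First we record the order structure. Since $a_2\in\Delta^{(5)}\setminus\Delta^{(6)}$, its closest left neighbour $b=a_2^-$ in $\Delta^{(4)}$ lies in $\Delta^{(4)}\setminus\Delta^{(5)}$, so $a_1<b<a_2$ and $\delta_b<\delta_{a_2}<\delta_{a_1}$. Likewise $b<c<a_2$ with $\delta_c<\delta_b$, $b<d<c$ with $\delta_d<\delta_c$, and $d<e<c$ with $\delta_e<\delta_d$.

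Property ($\ast$) for the consecutive pair $\delta_{a_1},\delta_{a_2}$ of $\Delta^{(5)}$ gives $\delta_y<\delta_{a_1}$ for all $a_1<y<a_2$. Observation~\ref{observation-1} then supplies three further bounds:
\begin{itemize}
\item applied to $a_2$: $\delta_y<\delta_{a_2}$ for $b\le y<a_2$;
\item applied to $b$: $\delta_y<\delta_b$ for $b<y\le c$;
\item applied to $c$ and $d$: $\delta_y<\delta_c$ for $d\le y<c$, and $\delta_y<\delta_d$ for $d<y\le e$.
\end{itemize}
Combined with Property B, these bounds let us read off the $\delta$-sequences of the $4$-tuples below.

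For (i), fix $b<x\le c$ and suppose $\phi(\delta_b,\delta_x)=\phi(\delta_{a_1},\delta_b)$. We use four tuples in turn.
\begin{enumerate}
\item The tuple $(v_{a_1},v_b,v_x,v_{x+1})$ has $\delta$-sequence $(\delta_{a_1}>\delta_b>\delta_x)$, which is monotone. Since it is not an edge, \textbf{(i)} forces $\phi(\delta_{a_1},\delta_x)=\phi(\delta_{a_1},\delta_b)$.
\item The tuple $(v_{a_1},v_x,v_{x+1},v_{a_2+1})$ has $\delta$-sequence $(\delta_{a_1}>\delta_x<\delta_{a_2})$ with $\delta_{a_1}>\delta_{a_2}$. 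By \textbf{(ii)}, $\phi(\delta_{a_1},\delta_x)=\phi(\delta_x,\delta_{a_2})$.
\item The tuple $(v_{a_1},v_b,v_{b+1},v_{a_2+1})$ has $\delta$-sequence $(\delta_{a_1}>\delta_b<\delta_{a_2})$ with $\delta_{a_1}>\delta_{a_2}$. By \textbf{(ii)}, $\phi(\delta_{a_1},\delta_b)=\phi(\delta_b,\delta_{a_2})$.
\item Combining steps 1--3 gives $\phi(\delta_b,\delta_x)=\phi(\delta_b,\delta_{a_2})=\phi(\delta_x,\delta_{a_2})$. The tuple $(v_b,v_x,v_{x+1},v_{a_2+1})$ has $\delta$-sequence $(\delta_b>\delta_x<\delta_{a_2})$ with $\delta_b<\delta_{a_2}$, so \textbf{(iii)} makes it an edge of $H[Q]$, a contradiction.
\end{enumerate}

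For (ii), rerun the same four steps with the triple $(a_1,b,a_2)$ replaced by $(b,d,c)$. We need the analogous facts: $b<d<c$; $\delta_b>\delta_c>\delta_d$; $\delta_y<\delta_b$ on $(b,c)$; $\delta_y<\delta_c$ on $[d,c)$; and $\delta_x<\delta_d$ on $(d,e]$. All of these were listed above.

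The only real obstacle is bookkeeping: checking in each of the eight tuples that the maxima given by Property B are exactly the claimed $\delta$'s, and that the vertices are distinct and correctly ordered. For example, $x+1\le c+1\le a_2<a_2+1$ in (i), and $x+1\le e+1\le c<c+1$ in (ii). This is where the choice of $b,c,d,e$ as closest neighbours in the successive layers is used.
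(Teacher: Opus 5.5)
Your proposal is correct and essentially identical to the paper's proof. For part (i) you use the same four $4$-tuples, $(v_{a_1},v_b,v_x,v_{x+1})$, $(v_{a_1},v_x,v_{x+1},v_{a_2+1})$, $(v_{a_1},v_b,v_{b+1},v_{a_2+1})$ and finally $(v_b,v_x,v_{x+1},v_{a_2+1})$, with only the order of the middle two swapped. For part (ii), the substitution $(a_1,b,a_2)\to(b,d,c)$ together with the order facts you list is exactly the ``similar argument'' the paper leaves implicit, and the facts you list do justify each step.
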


\noindent\textbf{Proof.}
(i) Suppose, to the contrary, that $\phi(\delta_b,\delta_{x})=\phi(\delta_{a_1},\delta_b)$ for some $b< x\leq c$. We first show that $\phi(\delta_{a_1},\delta_x)=\phi(\delta_{a_1},\delta_b)$. Indeed, otherwise
$\phi(\delta_{a_1},\delta_x)\neq\phi(\delta_{a_1},\delta_b)=\phi(\delta_b,\delta_{x})$, which together with Observation \ref{observation-1} and \textbf{(i)} would imply $(v_{a_1},v_{b},v_{x},v_{x+1})\in E(H[Q])$, a contradiction.
Next, $\phi(\delta_{a_1},\delta_{b})=\phi(\delta_{b},\delta_{a_2})$;
otherwise $\phi(\delta_{a_1},\delta_{b})\neq\phi(\delta_{b},\delta_{a_2})$, and Observation \ref{observation-1} together with \textbf{(ii)} would give $(v_{a_1},v_{b},v_{b+1},v_{a_2+1})\in E(H[Q])$, again a contradiction.
Applying \textbf{(ii)} to $(v_{a_1},v_x,v_{x+1},v_{a_2+1})$ yields $\phi(\delta_{a_1},\delta_x)=\phi(\delta_x,\delta_{a_2})$.
Therefore, $\phi(\delta_{b},\delta_x)=\phi(\delta_{x},\delta_{a_2})=\phi(\delta_{b},\delta_{a_2})$.
Then \textbf{(iii)} implies $(v_{b},v_x,v_{x+1},v_{a_2+1})\in E(H[Q])$, a contradiction.

\medskip
(ii) By a similar argument as above, we can obtain the desired result.
\hfill$\Box$

\medskip
By Claim~\ref{ind-2}(i) and the fact that $\phi$ uses only two colors, it follows that $\phi(\delta_b,\delta_{d})=\phi(\delta_b,\delta_{f})$. Then Claim~\ref{ind-2}(ii) forces $\phi(\delta_d,\delta_{f})\neq\phi(\delta_b,\delta_{d})$.
Then applying \textbf{(ii)} to $(v_{d},v_f,v_{f+1},v_{e+1})$ gives $\phi(\delta_{d},\delta_f)=\phi(\delta_f,\delta_{e})$.
Therefore $\phi(\delta_{b},\delta_f)\neq\phi(\delta_f,\delta_{e})$.
Now observe that $\delta(v_{b},v_f,v_{f+1},v_{e+1})=(\delta_{b}>\delta_f<\delta_{e})$ and $\delta_{b}>\delta_{e}.$
Thus by \textbf{(ii)}, $(v_{b},v_f,v_{f+1},v_{e+1})\in E(H[Q])$, a contradiction.

This proves $\alpha(H)<2^6n^5+1$.

\medskip

Combining the two subsections, we have constructed a $4$-graph $H$ on
$N=2^{\lfloor 2^{c_0n}\rfloor}$ vertices that is $K_5^{(4)}$-free and satisfies
$\alpha(H)<2^6n^5+1$. Therefore $r_4(5,2^6n^5+1)>2^{\lfloor 2^{c_0n}\rfloor}$. Recall that $c_0>0$ is an absolute constant from Lemma \ref{phi}. Consequently, there exists an absolute constant $c>0$ such that for all $n\ge 5$,
$r_4(5,n)\ge 2^{2^{cn^{1/5}}}$. This completes the proof of Theorem~\ref{main}.\hfill$\Box$


\end{spacing}
\end{document}